\newcolumntype{M}[1]{>{\centering\arraybackslash}m{#1}}
\newcolumntype{N}{@{}m{0pt}@{}}
\newtheorem{theorem}{Theorem}  
\newtheorem{proposition}{Proposition} 
\newtheorem{remark}{Remark} 
\newtheorem{lemma}{Lemma} %
\newcommand*\wthelper[2]{%
        \hbox{\dimen@\accentfontxheight#1%
                \accentfontxheight#11.2\dimen@ % change the scaling factor 1.2 to change the height of the tilde
                $\m@th#1\widetilde{#2}$%
                \accentfontxheight#1\dimen@
        }%
}
\newcommand*\accentfontxheight[1]{%
        \fontdimen5\ifx#1\displaystyle
                \textfont
        \else\ifx#1\textstyle
                \textfont
        \else\ifx#1\scriptstyle
                \scriptfont
        \else
                \scriptscriptfont
        \fi\fi\fi3
}
\newcommand{\randomchoose}[1]{We choose randomly the element\ifstrequal{#1}{s}{}{s} % is there more than a single random element?
}
\newcommand{\FF}{\mathbb{F}}
\newcommand{\Sym}{\mathrm{Sym}}
\newcommand{\Alt}{\mathrm{Alt}}
\newcommand{\GL}{\mathrm{GL}}
\newcommand{\N}{\text{$\mathbf{N}$}}
\newcommand{\MM}{\mathcal{M}}
\title{A note on an infeasible linearization of some block ciphers}
\author[1]{Riccardo Aragona\thanks{ric.aragona@gmail.com}}
\author[1]{Anna Rimoldi\thanks{anna.rimoldi@gmail.com}}
\author[1]{Massimiliano Sala\thanks{maxsalacodes@gmail.com }}
\affil[1]{Department of Mathematics, University of Trento, Italy}
\date{}
\begin{document}
	\maketitle
\begin{abstract}
A block cipher can be easily broken if its encryption functions can be seen as  
linear maps on a small vector space. Even more so, if its round functions can be seen as  
linear maps on a small vector space.
We show that this cannot happen for the AES. More precisely, we prove 
that if  the AES round transformations can be embedded into a linear cipher acting on a vector space, then this space is huge-dimensional  and so this embedding is infeasible in
practice.  We present two elementary proofs.
\end{abstract}

%\begin{keyword}
\small{\textbf{Keywords:} AES, block cipher, group theory.
%\end{keyword}
%\end{frontmatter}
%==================================================================

\section{Introduction}
The Advanced Encryption Standard (AES)  \cite{CGC-misc-nistAESfips197} is nowadays the most
widespread block cipher in commercial applications. It represents the state-of-the-art in block cipher design and the only known attack on its full version is the biclique attack \cite{bogdanov2011biclique}, which still requires an amount of cryptanalytic effort slightly less than the brute-force key-search. Practical attacks on reduced versions of the AES only tackle up to 6 rounds, with the Partial Sum attack being the most dangerous \cite{ferguson2001improved,partialnostro}
%It represents the state-of-art in block 
%cipher design and provides an unparalleled level of assurance
%against all known cryptanalytic techniques, except for its round-reduced versions (see for example \cite{daemen1997block,ferguson2001improved}).
%AES (and other modern block ciphers) presents a highly
%algebraic structure leading researchers to exploit it for new algebraic attacks,
%but these tries have been unsuccessful as yet, except for academic reduced
%versions.
The best that a designer can   hope for a block cipher is that all its encryption
functions behave in an unpredictable way (close to random), in particular the designer
aims at a cipher behaviour  totally different from linear or affine maps.\\
An indication of the cryptographic strength of the AES is that nobody has been able to show that
its encryption functions are any closer to linear maps than arbitrary random
functions. However, it might be possible to extend the AES to act on larger spaces, in
such a way that the possible non-random behaviour of the AES becomes easier to spot. Generally
speaking, the worst scenario for a designer consists of a space large enough to make the AES
linear but small enough to allow practical computations.

In this note we prove in two elementary ways, respectively using counting arguments (Section \ref{elements}) and number theory arguments (Section \ref{orders}),
that the round functions of the AES cipher cannot be embedded  into a linear group acting on a vector space $W$, unless the dimension of
$W$ is huge, making this embedding useless in
practice. Both proofs show that the smallest degree of a (faithful) 
representation of the group generated by the round functions 
of the AES, that is $\Alt((\FF_2)^{128})$ \cite{CGC-cry-art-sparr08}, is at least $2^{67}$. Since  computing a  $2^{67}\times 2^{67}$ matrix is infeasible in practice, our result shows that this attack cannot be mounted in practice. 

In 1976 Wagner \cite{wagner1976faithful} studied the  (faithful) linear representations  of
 $\Alt((\FF_2)^{128})$  and was already able to prove that their minimal degree is $2^{128}$. Therefore, we do not claim any new algebraic result in this note. The interest  of our note lies in three facts. First, we provide elementary proofs of our degree estimate, lacking a deep algebraic background, while Wagner's proof follows an involved argument relying on representation theory. Second, we show the link between a purely group theory result and a possible practical application in cryptanalysis, providing thus more assurance in the cipher itself (since the attack cannot be practically mounted). Third, our two proofs are based on utterly different arguments, one counting the number of group elements and the other estimating the maximal element order, but do lead to the same numerical estimate, which we find unexpected and deserves noting.
 
 This note is part of the PhD thesis of the second author \cite{thesisrimoldi}, which was never before published in a peer-reviewed journal.

%A classical proof
%that the least degree of a faithful representation of $\Alt(\FF^{128})$ is 128 is given in \cite{wagner1976faithful}. In \cite{wagner1976faithful} the author uses a representation theoretical  approach, here we obtain our result with an ``elementary'' combinatorial proof. Moreover our estimate is weaker than
%Wagner's, but strong enough to show the linearization infeasibility. 

\section{Preliminaries}
Let $n \geq 2$ be an integer. Let $\FF= \FF_2$ be the field with $2$ elements. Let $V=\FF^n$ be the vector space over $\FF$ of dimension $n$.
We denote by $\Sym(V)$ and $\Alt(V)$, respectively, the symmetric and 
alternating group on $V$.
We denote by $\GL(V)$ the group of all linear permutations of $V$.

%\\
%By taking $V=2^{|G|}$ we can always linearize $G$ over $V$ via the so-called {\em regular} representation, but of course this is huge and normally impractical.
%\\

Let ${\mathcal C}$ be any block cipher such that the plaintext space $\MM$
coincides with the ciphertext space. Let ${\mathcal K}$ be the key space.
Any key $k\in {\mathcal K}$ induces a permutation $\tau_k$ on $\MM$.
Since $\MM$ is usually $V=\FF^n$ for some $n\in \N$, we can view
$\tau_k$ as an element of $\Sym(V)$.
We denote by $\Gamma=\Gamma({\mathcal C})$ the subgroup of $\Sym(V)$ generated
by all the $\tau_k$'s.
Unfortunately, the knowledge of $\Gamma({\mathcal C})$ is out of reach
for the most important block ciphers, such as 
the AES \cite{CGC-misc-nistAESfips197} and the 
DES \cite{CGC-misc-nistDESfips46}. 
However, researchers have been able to compute another related
group. Suppose that ${\mathcal C}$ is the composition of $l$ rounds
(the division into rounds is provided in the document describing the cipher).
Then any key $k$ would induce $l$ permutations, 
$\tau_{k,1},\ldots,\tau_{k,l}$, whose composition is $\tau_k$.
For any round $h$, we can consider $\Gamma_h({\mathcal C})$ as the 
subgroup of $\Sym(V)$ generated by the $\tau_{k,h}$'s 
(with $k$ varying in ${\mathcal K}$).
We can thus define the group $\Gamma_{\infty}=\Gamma_{\infty}({\mathcal C})$
as the subgroup of $\Sym(V)$ generated by all the $\Gamma_h$'s.
Obviously, $\Gamma \,\leq\, \Gamma_{\infty} \,.$
The group $\Gamma_{\infty}$ is traditionally called the \emph{group generated
by the round functions} with  independent sub-keys.
This group is known for some important ciphers, for the AES we have

\begin{proposition}[\cite{CGC-cry-art-sparr08}]
\label{wer}

 $$\Gamma_{\infty}({\mathrm{AES}})=\Alt(\FF^{128}).$$
\end{proposition}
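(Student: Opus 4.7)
I would prove the two inclusions $\Gamma_\infty(\mathrm{AES}) \subseteq \Alt(\FF^{128})$ and the reverse separately. The first is a direct parity check; the equality reduces, via Jordan's theorem for primitive groups, to establishing primitivity together with the existence of a short-support element in $\Gamma_\infty$.

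\emph{Evenness.} Set $V=\FF^{128}$. An AES round is a composition of a key addition $\rho_k\colon x\mapsto x\oplus k$, the bytewise S-box layer, the byte permutation ShiftRows, and the linear mixing MixColumns. A nonzero translation on $V$ is a product of $2^{127}$ disjoint transpositions, hence has sign $(-1)^{2^{127}}=+1$. Every $\sigma\in\GL(V)$ is even because $\GL_{128}(\FF)=\mathrm{SL}_{128}(\FF)$ is a nonabelian simple group and therefore admits no nontrivial homomorphism to $\{\pm 1\}$. The bytewise S-box extends an $8$-bit permutation to $V$ as the same permutation acting on each of $2^{120}$ parallel copies of $\FF^8$, so its sign is $\epsilon^{2^{120}}=+1$ for either value $\epsilon=\pm 1$. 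Composing these factors shows $\tau_{k,h}\in\Alt(V)$ for every key and round, so $\Gamma_\infty\subseteq\Alt(V)$.

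\emph{Translation subgroup.} For each round $h$ the family $\{\tau_{k,h}\}_k$ has the form $\psi_h\circ\rho_{k_h}$ (or $\rho_{k_h}\circ\psi_h$) for a fixed permutation $\psi_h$ independent of $k_h$, so $\tau_{k,h}\circ\tau_{k',h}^{-1}$ ranges over all translations; the full translation group $T=\{\rho_v:v\in V\}$ therefore lies in $\Gamma_\infty$.

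\emph{Primitivity, a $3$-cycle, and the main obstacle.} Since $T\leq\Gamma_\infty$ acts regularly, any nontrivial system of blocks containing $0$ is the coset partition of a proper nonzero subspace $W\subsetneq V$ setwise preserved by every round function. Up to translation, $W$ must then be stabilized simultaneously by the bytewise S-box layer and by MixColumns$\circ$ShiftRows; the strong diffusion of MixColumns, together with the byte-local structure of the S-box, rules out any such nonzero proper $W$, establishing primitivity. I would then construct an explicit short-support element, for instance a $3$-cycle, by composing round functions with specially chosen subkeys so that the S-box nonlinearity produces disagreement with the identity on only three points. A primitive subgroup of $\Alt(V)$ containing a $3$-cycle equals $\Alt(V)$ by Jordan's theorem, which combined with the evenness step yields $\Gamma_\infty(\mathrm{AES})=\Alt(\FF^{128})$. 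The genuine difficulty sits in this last paragraph: ruling out every coset-invariant subspace requires real structural information about how the S-box and MixColumns interact, and exhibiting a small-support element hinges on specific AES-internal cancellations; the earlier steps are just routine sign bookkeeping and elementary permutation-group observations.
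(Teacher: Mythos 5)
The paper does not actually prove Proposition \ref{wer}: it is imported as a black box from Sparr and Wernsdorf \cite{CGC-cry-art-sparr08}, so your proposal has to be measured against that external proof. Your routine steps agree with its opening moves and are correct as stated: every round function is even (nonzero translations are products of $2^{127}$ transpositions, $\GL(\FF^{128})$ is perfect so lies in the alternating group, and a single-byte S-box application has sign $\epsilon^{2^{120}}=+1$), the quotients $\tau_{k,h}\circ\tau_{k',h}^{-1}$ do generate the full translation group $T$, and since $T$ is regular and elementary abelian, any block containing $0$ is a linear subspace whose coset partition must be preserved.

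The gap sits exactly where you flag it, and one of the two missing steps points in an unworkable direction. First, primitivity cannot follow from ``strong diffusion plus byte-locality'': if the S-box were affine, the cipher would preserve many coset partitions no matter how well MixColumns diffuses, so the argument必must use a concrete nonlinearity property of the inversion S-box --- Sparr and Wernsdorf verify (partly by machine computation) that no nontrivial subspace coset partition of $\FF^8$ is compatible with the S-box, and only then does the diffusion argument kill the remaining candidates for $W$; see also \cite{caranti2009application}. Second, and more seriously, the plan to exhibit a $3$-cycle as a word in round functions is not realistic: each round function disagrees with every affine map on an enormous set, and there is no known way to drive the support of a composition down to three points out of $2^{128}$; no proof in the literature does this. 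Instead the argument is closed with classification machinery: by the O'Nan--Scott theorem together with the Guralnick--Li classification of primitive groups containing a regular elementary abelian subgroup, a primitive group of degree $2^{128}$ containing $T$ is (apart from small exceptional degrees) either contained in $\AGL(128,2)$ or contains $\Alt(\FF^{128})$, and the affine case is excluded because the S-box is not affine --- a weak, checkable condition, unlike the existence of a $3$-cycle. You should replace the Jordan $3$-cycle endgame with this (non-elementary, CFSG-dependent) step, or at least with a Jordan-type criterion whose hypothesis you can actually verify.
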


\begin{remark}
The fact that $\Gamma_{\infty}$ is the alternating group is not an exception holding only for the AES. Indeed, any block cipher built choosing accurately the cipher components (SBoxes and linear mixing layer) will have $\Gamma_{\infty}$ as either the  alternating group or the symmetric group, even if the cipher is defined over a positive characteristic (see \cite{caranti2009application,aragona2014group}).\\
\end{remark}
%
%It is common belief among researchers that $\Gamma_{AES}=\Gamma_{\infty}({\mathrm{AES}})=\Alt(\FF^{2^{128}})$.

%
Given a finite group $G$, we say that $G$ can be \emph{linearized} if there is an injective morphism $\pi:G \rightarrow \GL(W)$, for some vector space $W$
(this is called a ``faithful representation'' in representation theory).
If $G$ can be linearized, then, for any element $g \in G$, an attacker can compute a matrix $M_g$ corresponding to the action of $g$ over $W$ (via $\pi$). If the dimension of $W$ is sufficiently small, the matrix computation is straightforward, since it is enough to evaluate $g$ on a basis of $W$. In cryptanalysis, this attack would be called a \emph{chosen-plaintext attack} and can easily be translated into a \emph{known-plaintext attack} by collecting enough random  plaintext-ciphertext pairs.

In this note we show  that it is impossible to view $\Gamma_{\infty}({\mathrm{AES}})$  as a subgroup
of $\GL(W )$ with $W$ of small dimension. In
Cryptography it is customary to present estimates as powers of two, so our problem becomes to find the smallest $m$ such that
$\Gamma_{\infty}({\mathrm{AES}})$ can be linearized in $\GL(\FF^{2^m})$. 
%A classical proof is given in \cite{wagner1976faithful} where the author proves that $m = 128$. Here we obtain a result with an ``elementary'' combinatorial proof. Our estimate is weaker than
%Wagner's, but strong enough to show a linearization of AES is infeasible. 

There are two elementary ways to show that a finite group $H$ cannot be
contained (as isomorphic image) in a finite group $G$. The first is to show that
$|H| > |G|$, the second is to show that there is $\eta\in H$ such that its order is
strictly larger than the maximum element order in $G$. In Section \ref{elements} we present
our result using the first approach and we show that $m\geq 67$. In Section \ref{orders} we present our result using
the second approach and we show again that $m\geq 67$.

\section{Counting the group size}\label{elements}
%It is common belief among researchers that $\Gamma_{AES}=\Gamma_{\infty}({\mathrm{AES}})=\Alt(\FF^{128})$.
%Assuming this, we show in this section that it is impossible to view $\Gamma_{AES}$ as a subgroup of $\mathrm{GL}(V)$ with $V$ of small dimension. 
In this section we show that the order of $\Alt(\FF^{128})$ is strictly larger
than the order of $\GL(\FF^{2^{66}})$, so that if $\Alt(\FF^{128})<\GL(\FF^{2^{m}})$ then $m\geq 67$.\\
First we recall some well-known formulae: if $V=\FF^n$,
$$
  |\Sym(V)|= 2^{n}!, \quad |\Alt(V)|=\frac{2^{n}!}{2} \quad
  |\GL(V)|= \prod_{h=0}^{n-1}(2^{n}-2^h)<2^{n^2} \,.
$$
We begin with showing a lemma.

\begin{lemma}
\label{lemma-final}
The following inequality holds
$$ 2^{(2^7)^{19}}< 2^{128}!< 2^{(2^7)^{20}}.$$
\begin{proof}
Let $n=2^7$, we have to show $ 2^{n^{19}}< 2^n!< 2^{n^{20}}.$\\
We first show that $ 2^{n^{19}}< 2^n!$\,.\\
  %for $n \geq 2^7$:
Note that the following inequality 
\begin{equation}
\label{eqw}
\frac{1}{2^{n-i}}\geq \frac{1}{2^{n-i+1}-h}
\end{equation}
holds for $1\leq i \leq n-2$ and $ 1 \leq h \leq 2^{n-i}$.\\
Clearly
\begin{eqnarray*}
2^{n}! >2^{n^{19}} & \iff & 2^{n}(2^{n}-1)!> 2^{n} \cdot 2^{n^{19}-n}\\
                   & \iff & (2^{n}-1)(2^n-2)! > 2^{n^{19}-n}\cdot \frac{2^{n}-1}{2^{n}-1} \quad .
\end{eqnarray*}
We apply (\ref{eqw}) with $i=1$ and $h=1$ and so we must prove
$$(2^{n}-1)(2^n-2)! >2^{n^{19}-n}\cdot \frac{2^{n}-1}{2^{n-1}},$$
i.e.  
\begin{equation}
\label{eqw1}
(2^n-2)! >  2^{n^{19}-n-(n-1)}.
\end{equation}
In a similar way for $i=1$ and $h=2$, from (\ref{eqw1}) we obtain 
$$
(2^n-2)(2^n-3)! >  2^{n^{19}-n-(n-1)}\cdot \frac{2^{n}-2}{2^{n-1}},
$$
hence
$$
2^{n}! >2^{n^{19}} \iff (2^n-3)! >  2^{n^{19}-n-2(n-1)},
$$
and so on for all $3 \leq h \leq 2^{n-1}$ 
we obtain that we must verify
$$
(2^{n-1}-1)! >2^{n^{19}-n-2^{n-1}(n-1)}.
$$
Then we proceed by applying (\ref{eqw}) for all $2 \leq i \leq n-2$ 
and all $ 1 \leq h \leq 2^{n-i}$, so that we need only to prove
 $$
  (2^{n-(n-1)}-1)!\geq  2^{n^{19}-n-\sum_{i=1}^{n-1}2^{n-i}(n-i)} \,.
 $$
%
%\begin{eqnarray*}
%(2^n-2)!   &    >   & 2^{n^{19}-n-(n-1)}  \\
%           & \vdots &  \\
%(2^n-2^n+1)& \geq   & 2^{n^{19}-(2^n-1)n}
% \end{eqnarray*}
%
In other words, we have to prove  
\begin{equation}
\label{right-size}
1 > 2^{n^{19}-n-\sum_{i=1}^{n-1}2^{n-i}(n-i)}, \quad \textrm{that is,} \quad 0> n^{19}-n-\sum_{i=1}^{n-1}2^{n-i}(n-i).
\end{equation}
But a direct check shows that the right-hand size of (\ref{right-size}) 
holds when $n = 2^{7}$.\\ %Provato sperimentalmente.\\

 We are left with proving the following inequality: $2^n!< 2^{n^{20}}$.\\
  We proceed by induction for $2\leq n\leq 2^7$. In this range a computer computation shows that 
  \begin{equation}
  \label{b}
  n^{20} + 2^n n +2^n < (n+1)^{20}.
  \end{equation}
  When $n=2$, we have $2^2!< 2^{2^{20}}$.\\
 Suppose that $2^n!< 2^{n^{20}}$ and $n\leq 2^7$. 
We have to prove that $2^{(n+1)}!< 2^{(n+1)^{20}}$.
 Since $ 2^{n+1}! \,=\,  (2^n \cdot 2)!  = 2^n!(2^n+1) \cdots (2^n+2^n)$ and since $2^{n}+j\leq 2^{n+1}$ for all $1\leq j\leq 2^n$, 
we have
 $$
 2^n!(2^n+1) \cdots (2^n+2^n)  < 2^{n^{20}+n+1}\cdot (2^n +2) \cdots (2^n+2^n) 
         \leq \, 2^{n^{20}+2^n(n+1)} = 2^{n^{20}+2^n n +2^n}
 $$
 and, applying ($\ref{b}$), we get $2^{n^{20}+2^n n +2^n}<2^{(n+1)^{20}}.$
 %but  $n^{20} + 2^n n +2^n < (n+1)^{20}$ when $2 \leq n \leq 2^7$, so $2^{n^{20}+2^n n +2^n}<2^{(n+1)^{20}}.$\\
 Then the claimed inequality $2^{n+1}!< 2^{(n+1)^{20}}$ follows.
 \end{proof}
\end{lemma}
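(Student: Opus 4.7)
The plan is to set $n := 2^7 = 128$ and bound $\log_2(2^n!) = \sum_{k=1}^{2^n} \log_2 k$ on both sides by wholly elementary means, reducing each of the two desired inequalities to a trivial numerical check. Neither Stirling's formula nor induction should be necessary, because the target exponents $n^{19} = 2^{133}$ and $n^{20} = 2^{140}$ sit comfortably on either side of the natural estimate $n \cdot 2^n = 2^{135}$.

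For the upper bound $2^n! < 2^{n^{20}}$, I would use the crude inequality $\log_2 k \leq n$ for every $k \leq 2^n$, which yields $\log_2(2^n!) \leq n \cdot 2^n$. It then suffices to verify $n \cdot 2^n \leq n^{20}$, equivalently $2^n \leq n^{19}$. For $n = 2^7$ this is $2^{128} \leq (2^7)^{19} = 2^{133}$, which is immediate.

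For the lower bound $2^{n^{19}} < 2^n!$, I would retain only the top half of the factors in $2^n!$: for every $k$ with $2^{n-1} < k \leq 2^n$ one has $\log_2 k > n-1$, and there are exactly $2^{n-1}$ such $k$, whence $\log_2(2^n!) > (n-1) \cdot 2^{n-1}$. It then suffices to check $n^{19} < (n-1) \cdot 2^{n-1}$; for $n = 2^7$ this becomes $2^{133} < 127 \cdot 2^{127}$, equivalent to $2^6 < 127$, which holds.

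The only point that could conceivably require care is whether the crude per-term bounds $\log_2 k \leq n$ and (on the top half) $\log_2 k > n-1$ are tight enough at $n = 2^7$. They are: the lower side has slack by a factor of roughly $127/64 \approx 2$ and the upper side slack by a factor of $n = 128$, so the bookkeeping is painless and no sharper estimate (for instance, splitting into more dyadic blocks, or invoking Stirling) is needed.
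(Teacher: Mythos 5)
Your proof is correct, and it takes a genuinely simpler route than the paper's. Both of your bounds are sound: for the upper bound, $\log_2\bigl(2^n!\bigr)=\sum_{k=1}^{2^n}\log_2 k\le n\cdot 2^n=2^{135}<2^{140}=n^{20}$; for the lower bound, keeping only the $2^{n-1}$ factors $k$ with $2^{n-1}<k\le 2^n$ gives $\log_2\bigl(2^n!\bigr)>(n-1)2^{n-1}=127\cdot 2^{127}>2^{133}=n^{19}$. The paper does something structurally related but much heavier on both sides. For the lower bound it runs a telescoping argument, repeatedly replacing each factor $2^{n-i+1}-h$ by the power $2^{n-i}$ below it, which amounts to a \emph{full} dyadic-block lower bound $\log_2\bigl(2^n!\bigr)>\sum_{i=1}^{n-1}2^{n-i}(n-i)$ and culminates in a numerical check of $n^{19}<n+\sum_{i=1}^{n-1}2^{n-i}(n-i)$; you keep only the top dyadic block, which sacrifices about a factor of $2$ in the exponent but still clears $n^{19}=2^{133}$ with room to spare, and reduces the check to $2^6<127$. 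For the upper bound the paper proceeds by induction on $n$ up to $2^7$, relying on a computer verification of the auxiliary inequality $n^{20}+2^n n+2^n<(n+1)^{20}$ on that whole range, whereas your termwise bound $\log_2 k\le n$ needs no induction and no machine computation. In short, your argument proves the same statement with strictly less machinery; the paper's finer lower bound would matter only if the target exponent were closer to the true value $\log_2\bigl(2^n!\bigr)\approx(n-\log_2 e)2^n$, which $n^{19}=2^{133}$ is not.
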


Our result is contained in the following proposition.
\begin{proposition}\label{prop1}
Let $W=\FF^{2^m}$ with $m \geq 2$. If $G < \mathrm{GL}(W)$, with $G$ isomorphic to $\Alt(\FF^{128})$,  then $m\geq 67$.
\begin{proof}
If $G<\GL(W)$, then $|G|\leq |\GL(W)|$. But $|\Sym(\FF^{128})|=2^{128}!>2^{2^{133}}$ thanks to Lemma \ref{lemma-final} and so 
$$
  |G|=|\Alt(\FF^{128})|= \frac{|\Sym(2^{128})|}{2} > \frac{2^{2^{133}}}{2}
  = 2^{2^{133}-1}> 2^{2^{132}}=2^{(2^{66})^2} > |\GL(\FF^{2^{66}})| \,.
$$
\end{proof}
\end{proposition}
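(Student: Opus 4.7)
The plan is to reduce the statement to a comparison of orders: if $G<\GL(W)$ then in particular $|G|\le |\GL(W)|$, so it suffices to exhibit an exponent $m_0$ such that $|\Alt(\FF^{128})|>|\GL(\FF^{2^{m_0}})|$; then $m\ge m_0+1$. The candidate is $m_0=66$, and the target inequality is
\[
 |\Alt(\FF^{128})| \;>\; |\GL(\FF^{2^{66}})|.
\]

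To verify this target, I would chain four elementary estimates. First, using $|\GL(V)|=\prod_{h=0}^{n-1}(2^n-2^h)<2^{n^2}$ with $n=2^{66}$, I get $|\GL(\FF^{2^{66}})|<2^{(2^{66})^2}=2^{2^{132}}$. Second, for the lower bound on the alternating group, I apply Lemma \ref{lemma-final} directly: since $2^{128}=2^{2^7}$, the lemma yields $2^{128}!>2^{(2^7)^{19}}=2^{2^{7\cdot 19}}=2^{2^{133}}$. Third, dividing by $2$ preserves a generous margin: $|\Alt(\FF^{128})|=2^{128}!/2>2^{2^{133}-1}$. Fourth, I check the trivial exponent comparison $2^{133}-1>2^{132}$, so $|\Alt(\FF^{128})|>2^{2^{132}}>|\GL(\FF^{2^{66}})|$, which is exactly what we need.

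Combining these, the assumption $G<\GL(\FF^{2^m})$ with $m\le 66$ would give $|\Alt(\FF^{128})|\le |\GL(\FF^{2^{66}})|$ (since $\GL(\FF^{2^m})$ embeds into $\GL(\FF^{2^{66}})$ for $m\le 66$, or equivalently its order is smaller), contradicting the display above. Hence $m\ge 67$.

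The only genuinely nontrivial ingredient is the lower bound $2^{128}!>2^{2^{133}}$, but that is precisely what Lemma \ref{lemma-final} supplies, so no new estimate is needed; the remainder is just bookkeeping of exponents. The main thing to be careful about is to make sure the exponents line up cleanly: one wants $(2^7)^{19}=2^{133}$ on one side and $(2^{66})^2=2^{132}$ on the other, with the gap of one in the exponent of the exponent absorbing the factor of $2$ coming from $|\Alt|=|\Sym|/2$ and the strict inequality $|\GL|<2^{n^2}$.
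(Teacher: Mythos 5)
Your proposal is correct and follows essentially the same route as the paper: both reduce the claim to the order comparison $|\Alt(\FF^{128})|>|\GL(\FF^{2^{66}})|$, invoke Lemma \ref{lemma-final} for the lower bound $2^{128}!>2^{(2^7)^{19}}=2^{2^{133}}$, and close with the exponent bookkeeping $2^{133}-1>2^{132}=(2^{66})^2$ against the bound $|\GL(\FF^{2^{66}})|<2^{(2^{66})^2}$. Your explicit remark that $|\GL(\FF^{2^m})|\le|\GL(\FF^{2^{66}})|$ for $m\le 66$ is a small detail the paper leaves implicit, but it changes nothing of substance.
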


\begin{remark}
We could improve the previous bound to $l\geq 68$ by using the finite version of the Stirling formula:
$$
n\log_2(n)-n\log_2(e) \leq \log_2(n!) \leq n\log_2(n) - n\log_2(e) + \log_2 (n),
$$
or equivalently
\begin{equation}\label{stirling}
\left(\frac{n}{e}\right)^n\leq n! \leq n \left(\frac{n}{e}\right)^n.
\end{equation}
However, our proof involves only elementary combinatorial arguments, which are not enough to prove \eqref{stirling}, since
the latter requires some non-algebraic arguments, such as mathematical analysis.
\end{remark}
%and so the result of Proposition \ref{prop1}
% could  be used also for any other block cipher ${\mathcal C}$ acting on $128-$bit messages such that 
%$\Gamma_{\infty}({\mathcal C})=\Alt(\FF^{128})$, e.g. the SERPENT 
%\cite{CGC-cry-art-serpent}.

%
%An immediate consequence of the previous proposition is that the AES cannot be 
%linearized without using matrices of size larger than $2^{66}$, 
%which is obviously impractical.
%An open question is to determine the smallest dimension $d$ of $V$ such that AES can be linearized. We note that the regular representation gives a bound $d \leq 2^{128}$, so $2^{66}<d \leq 2^{128}$. 
%We have some work in progress to strengthen our bound using more advanced group theory, that is, the classification of maximal subgroups of $\mathrm{GL}(V)$ (Aschbacher's theorem and its applications).

\section{Computing the maximum order of elements}\label{orders}

In this section we compare the maximum order of elements in the two groups $\Alt(\FF^{128})$ and $\GL(\FF^{2^m})$. We use permutations of even order. We denote by $\mathrm{o}(\sigma)$ the order of any permutation $\sigma$.\\
We need the following two theorems.

\begin{theorem} [\cite{darafsheh2008maximum}]\label{teo1} Let $\sigma\in\GL(\FF^N)$, with $\mathrm{o}(\sigma)$  even and $N\geq 4$.
Then
$$
\mathrm{o}(\sigma) \leq  2(2^{N-2}-1) = 2^{N-1}-2.
$$

Moreover, there is $\sigma\in\GL(\FF^N)$ whose order attains the upper bound.
\end{theorem}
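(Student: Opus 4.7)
The plan is to exploit the rational canonical form of $\sigma \in \GL(\FF^N)$. Since $\sigma$ is invertible, $\FF^N$ decomposes as a direct sum of $\sigma$-cyclic subspaces $V_1,\dots,V_k$ with elementary divisors $f_1^{e_1},\dots,f_k^{e_k}$, where each $f_i\in\FF[x]$ is irreducible, $f_i\neq x$, of degree $d_i$, and $\sum_i e_i d_i = N$. First I would establish the block-wise order formula
\[
\mathrm{o}(\sigma|_{V_i}) \;=\; \mathrm{ord}(f_i)\cdot 2^{\lceil \log_2 e_i\rceil},
\]
where $\mathrm{ord}(f_i)$ denotes the multiplicative order of a root of $f_i$ in $\overline{\FF}$. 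This is a short computation in the local ring $\FF[x]/(f_i^{e_i})$ using the characteristic-$2$ Frobenius identity $(1+y)^{2^s} = 1+y^{2^s}$ for nilpotent $y$. Since $\mathrm{ord}(f_i)$ divides $2^{d_i}-1$, it is odd, and taking the lcm across blocks gives $\mathrm{o}(\sigma) = 2^a\cdot m$ with $a = \max_i \lceil\log_2 e_i\rceil$ and $m = \mathrm{lcm}_i\,\mathrm{ord}(f_i)$ odd.

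The hypothesis that $\mathrm{o}(\sigma)$ is even forces $a\geq 1$, hence the existence of an index $j$ with $e_j\geq 2$ and $\lceil\log_2 e_j\rceil = a$. Fixing such a $j$ and writing $V = V_j \oplus V'$ with $V' = \bigoplus_{i\neq j} V_i$ of dimension $N - e_j d_j$, one has $m \leq \mathrm{ord}(f_j)\cdot m'$, where $m'$ is the odd part of $\mathrm{o}(\sigma|_{V'})$. The key estimate is $\mathrm{o}(\sigma|_{V'})\leq 2^{\dim V'}-1$: the distinct powers $1,\sigma|_{V'},\sigma|_{V'}^2,\dots$ all lie in the nonzero part of the commutative $\FF$-algebra $\FF[\sigma|_{V'}]\subseteq\mathrm{End}(V')$, which has cardinality at most $2^{\dim V'}$. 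Combined with $\mathrm{ord}(f_j)\leq 2^{d_j}-1$, this yields
\[
\mathrm{o}(\sigma) \;\leq\; 2^a\,(2^{d_j}-1)\,(2^{N-e_j d_j}-1).
\]

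I expect the main obstacle to be the optimization: showing that the right-hand side above is maximized, over admissible $(a,e_j,d_j)$ with $a\geq 1$, $e_j\geq 2^{a-1}+1$, $d_j\geq 1$, $e_j d_j\leq N$, precisely at $(a,e_j,d_j) = (1,2,1)$, where it equals $2^{N-1}-2$. Heuristically, increasing any of the three parameters costs more in the exponent $N - e_j d_j$ of the odd-part factor than is recovered in $2^a(2^{d_j}-1)$; a direct case split (separately for $a=1$ with $d_j\geq 2$, for $a=2$ with $e_j\in\{3,4\}$ and $d_j=1$, and for $a\geq 3$ where $(e_j-1)d_j$ dominates $a$) discharges all remaining possibilities by elementary binary arithmetic, using the hypothesis $N\geq 4$ only to force the small cases.

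For achievability, I would exhibit $\sigma$ as the block diagonal matrix $\sigma = J_2 \oplus C_f$, where $J_2 = \bigl(\begin{smallmatrix} 1 & 1\\ 0 & 1\end{smallmatrix}\bigr)$ is the $2\times 2$ Jordan block giving elementary divisor $(x+1)^2$ of order $2$, and $C_f$ is the companion matrix of any primitive polynomial $f\in\FF[x]$ of degree $N-2$ (which exists because the cyclic group $\FF_{2^{N-2}}^*$ has a generator, whose minimal polynomial over $\FF$ has degree exactly $N-2$), of order $2^{N-2}-1$. Since the two block orders are coprime, $\mathrm{o}(\sigma) = 2\cdot(2^{N-2}-1) = 2^{N-1}-2$, matching the upper bound.
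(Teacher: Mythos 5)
Your proposal takes a genuinely different route from the paper, for the simple reason that the paper does not actually prove this theorem: its ``proof'' is a one-line reduction to Theorem~1 of the cited work of Darafsheh, specialized to $p=q=2$ and $N\geq 4$ so that the exceptional cases there do not apply. You instead reconstruct a self-contained argument from the rational canonical form: the block-order formula $\mathrm{o}(\sigma|_{V_i})\mid\mathrm{ord}(f_i)\,2^{\lceil\log_2 e_i\rceil}$ (only the divisibility is needed for the upper bound), the counting estimate $\mathrm{o}(\tau)\leq 2^{\dim V'}-1$ inside the commutative algebra $\FF[\tau]$, and an optimization over the admissible parameters $(a,e_j,d_j)$. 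These steps are all sound, the optimization does peak at $(1,2,1)$ with value $2^{N-1}-2$, and the extremal element $J_2\oplus C_f$ with $f$ primitive of degree $N-2$ is exactly right. This is in the spirit of the paper's stated goal of elementary proofs and is more informative than the citation it replaces, though the optimization itself is asserted as a plan rather than carried out.

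One corner does need patching before the argument is complete. Your master inequality $\mathrm{o}(\sigma)\leq 2^a(2^{d_j}-1)(2^{N-e_jd_j}-1)$ is literally false when $V'=0$, i.e.\ when $e_jd_j=N$, since the last factor degenerates to $0$; the correct statement there is $\mathrm{o}(\sigma)\leq 2^a(2^{d_j}-1)$ with the convention $m'=1$, and $\mathrm{o}(\sigma)\leq 2^a$ when moreover $d_j=1$. This is not cosmetic: it is precisely where the hypothesis $N\geq 4$ earns its keep. For $N=3$ the single elementary divisor $(x+1)^3$ gives an element of order $4>2^{N-1}-2=2$, so the theorem fails, and for $N=4$ the degenerate configuration $(x^2+x+1)^2$ attains the bound $6=2^3-2$ with equality. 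Your planned case split gestures at these small cases, but you should separate the $V'=0$ configurations explicitly rather than fold them into a product that vanishes. With that repair the proof goes through.
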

\begin{proof} It follows directly from Theorem 1 in \cite{darafsheh2008maximum}, with $p = q = 2$ and
$N \geq 4$ (so point (a) and (b) do not apply).
\end{proof}

\begin{theorem}[Theorem 5.1.A at p. 145 in \cite{dixon1996permutation}]\label{teo2} Let $\nu \geq 3$ and $n = 2^\nu$. Then $\Alt(\FF^\nu)$ contains
an element $\eta$ of order (strictly) greater than $e^{\sqrt{(1/4)n \ln n)}}$.
\end{theorem}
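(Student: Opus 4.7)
The plan is to exhibit an explicit $\eta \in \Alt(\FF^\nu)$ whose order exceeds the claimed bound, built as a product of disjoint cycles of distinct odd prime lengths on the $n=2^\nu$ points of $\FF^\nu$.

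First I would use the sign formula $(-1)^{\ell-1}$ for a cycle of length $\ell$: a cycle of odd length is already an even permutation. Hence, for any choice of distinct odd primes $p_1 < p_2 < \cdots < p_k$ with $p_1 + \cdots + p_k \leq n$, the permutation $\eta$ of $\FF^\nu$ that acts as the product of $k$ disjoint cycles of these lengths (and fixes the remaining points) lies in $\Alt(\FF^\nu)$. Since the lengths are pairwise coprime primes, the order of $\eta$ is the LCM, namely $p_1 p_2 \cdots p_k$. The problem thus reduces to a purely number-theoretic one: how large can $\prod_{i=1}^k p_i$ be subject to $\sum_{i=1}^k p_i \leq n$?

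The second step is to answer this via classical prime-counting estimates. Let $x$ be maximal with $\sum_{p\leq x,\,p \text{ odd prime}} p \leq n$ and take all odd primes $\leq x$ as the cycle lengths. Effective Chebyshev/Mertens-type bounds (e.g.\ the explicit Rosser--Schoenfeld inequalities) give
\[
\sum_{p \leq x} p \;\leq\; c_2\,\frac{x^2}{\ln x}, \qquad \theta(x) \;=\; \sum_{p \leq x} \ln p \;\geq\; c_1\, x,
\]
for explicit absolute constants $c_1,c_2>0$ and $x$ above a small threshold. The first inequality forces $x \gtrsim \sqrt{n\ln n}$ up to an absolute constant, and the second yields $\ln \mathrm{o}(\eta) = \theta(x)-\ln 2 \geq c\sqrt{n\ln n}$ for some absolute $c>0$. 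Arranging the constants so that $c > 1/2$ gives $\mathrm{o}(\eta) > e^{(1/2)\sqrt{n\ln n}} = e^{\sqrt{(n\ln n)/4}}$, as required.

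The main obstacle is just bookkeeping: one must certify the $1/4$ inside the square root uniformly for every $\nu\geq 3$, not only asymptotically. Since the prime number theorem gives $\ln\prod_{p\leq x}p \sim \sqrt{n\ln n}$, there is a full factor-of-two slack in the exponent, so explicit Chebyshev bounds cover all sufficiently large $\nu$; the handful of small cases $\nu\in\{3,4,5\}$ are then checked directly. For instance, already in $\Alt(\FF^3)$ the element $(1\,2\,3)(4\,5\,6\,7\,8)$ has order $15$, comfortably exceeding $e^{\sqrt{(1/4)\cdot 8\ln 8}}\approx 7.7$. As an alternative, one can quote Landau's function $g(n)=\max_{\sigma\in\Sym_n}\mathrm{o}(\sigma)$ together with the classical asymptotic $\ln g(n)\sim \sqrt{n\ln n}$, and observe that the maximiser can always be made even (restricting to odd-length cycles, or splitting off a transposition, costs only a bounded factor in the order).
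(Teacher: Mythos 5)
Your construction is essentially the paper's (and Dixon--Mortimer's): an element of $\Alt(\FF^\nu)$ built as a product of disjoint cycles of distinct prime lengths with total length at most $n$, whose order is the product of those primes, bounded below via Chebyshev-type estimates on $\theta(x)=\sum_{p\le x}\ln p$ and on $\sum_{p\le x}p$. The only real difference is how the constant $1/4$ is certified: the paper's version pins it down with a self-contained chain of inequalities resting on the monotonicity of $f(z)=z/\ln z$ and the single input $\theta(z)>z/2$ for $z\ge 19$, whereas you defer to explicit Rosser--Schoenfeld constants plus the asymptotic factor-of-two slack --- which does work out (so the argument is sound), though that bookkeeping is exactly the non-trivial part of the proof and remains unexecuted in your sketch.
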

In order to be able to compare the two estimates coming from Theorem \ref{teo1}
 and Theorem \ref{teo2}, we rewrite Theorem \ref{teo2} as follows, in order to have
$\mathrm{o}(\sigma)$ even. Our proof is an easy adaption of the proof contained in \cite{dixon1996permutation}.
\begin{theorem}\label{to3}
Let $\nu \geq 7$ and $n = 2^\nu$. Then $\Alt(\FF^\nu)$ contains
an element $\eta$ with $\mathrm{o}(\eta)>e^{\sqrt{(1/4)n \ln n)}}$ and $\mathrm{o}(\eta)$  even.
\end{theorem}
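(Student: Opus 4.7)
The plan is to adapt the construction in the proof of Dixon-Mortimer (Theorem~\ref{teo2}). Given $n = 2^\nu$, that proof takes the smallest odd primes $p_1 = 3 < p_2 = 5 < \cdots < p_k$ such that $p_1 + \cdots + p_k \leq n$ is maximized, and lets $\sigma$ be a product of disjoint cycles of lengths $p_1, \ldots, p_k$ acting on $\FF^\nu$. Each cycle of odd prime length is an even permutation, so $\sigma \in \Alt(\FF^\nu)$, and one obtains $\mathrm{o}(\sigma) = p_1 \cdots p_k > e^{\sqrt{(1/4) n \ln n}}$. The obstacle is that this order is odd, so a modification is needed to get even order while keeping the bound.

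The idea is to sacrifice four points and spend them on a double transposition. Concretely, I would apply the same construction with $n$ replaced by $m := n - 4$: choose $k$ maximal with $p_1 + \cdots + p_k \leq n-4$, and form a product $\sigma$ of disjoint cycles of those distinct odd prime lengths, supported on $p_1 + \cdots + p_k \leq n-4$ points of $\FF^\nu$, so that $\sigma$ fixes at least four points. Since $\nu \geq 7$ gives $m \geq 124$, the Dixon-Mortimer estimate applies and yields $\sigma \in \Alt(\FF^\nu)$ with $\mathrm{o}(\sigma) > e^{\sqrt{(1/4)(n-4)\ln(n-4)}}$. Let $\tau$ be a double transposition supported on four fixed points of $\sigma$. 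Then $\tau$ is an even permutation of order $2$ whose support is disjoint from that of $\sigma$, so $\eta := \sigma\tau \in \Alt(\FF^\nu)$ and $\mathrm{o}(\eta) = \mathrm{lcm}(\mathrm{o}(\sigma), 2) = 2\,\mathrm{o}(\sigma)$, which is even.

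The remaining task is the numerical check $2\,\mathrm{o}(\sigma) > e^{\sqrt{(1/4)n\ln n}}$, i.e.\ that losing four points is compensated by the extra factor $2$. Using the lower bound on $\mathrm{o}(\sigma)$, this reduces to
\begin{equation*}
\ln 2 \;>\; \tfrac{1}{2}\Bigl(\sqrt{n\ln n} - \sqrt{(n-4)\ln(n-4)}\Bigr) \quad \text{for all } n = 2^\nu,\; \nu \geq 7.
\end{equation*}
Rewriting the right-hand side as $\bigl(n\ln n - (n-4)\ln(n-4)\bigr)/\bigl(2\sqrt{n\ln n}+2\sqrt{(n-4)\ln(n-4)}\bigr)$ and bounding the numerator by $4(\ln n + 1)$ (via $(n-4)\ln(n/(n-4)) \leq 4$), one sees that it is of order $(\ln n + 1)/\sqrt{n\ln n}$, which decreases in $n$ and is already strictly less than $\ln 2 \approx 0.69$ at $n = 128$. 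This analytic step is the main point requiring care; the rest is a direct rerun of the Dixon-Mortimer argument, with four points set aside for an extra transposition pair.
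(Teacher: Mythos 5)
Your construction of the witness $\eta$ is the same as the paper's: four points are set aside for a pair of disjoint transpositions and the remaining points carry cycles of distinct odd prime lengths, so $\eta$ is an even permutation (odd cycles and a double transposition are all even) and $\mathrm{o}(\eta)=2\prod p$ is even as an integer. Where you genuinely diverge is in how the lower bound on $\mathrm{o}(\eta)$ is established. The paper reruns the Dixon--Mortimer argument internally: it replaces the constraint $\sum_{3\leq p\leq z}p\leq n$ by $4+\sum_{3\leq p\leq z}p\leq n$, works with $\theta^*(z)=\sum_{3\leq p\leq z}\ln p$, and pushes the Chebyshev-type estimate through to conclude $4(\theta^*(\bar z))^2>n\ln n$ directly. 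You instead invoke the unmodified bound as a black box at degree $n-4$ and let the extra factor $2$ absorb the loss of four points, via the elementary inequality
\[
\ln 2\;>\;\tfrac12\Bigl(\sqrt{n\ln n}-\sqrt{(n-4)\ln(n-4)}\Bigr),
\]
whose right-hand side you correctly majorize by $(\ln n+1)/\sqrt{(n-4)\ln(n-4)}$, which is below $0.25$ at $n=128$ and decreasing in $n$; your algebra here checks out, and the disjoint-support argument $\mathrm{o}(\sigma\tau)=\mathrm{lcm}(\mathrm{o}(\sigma),2)=2\,\mathrm{o}(\sigma)$ is valid because $\mathrm{o}(\sigma)$ is odd. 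Your route is more modular and its analytic step is arguably more elementary than redoing the $\theta^*$ estimate. One caveat you should make explicit: Theorem~\ref{teo2} as quoted in the paper applies only to degrees of the form $2^\nu$, and $n-4$ is not such a degree, so you must appeal to the general-degree form of Theorem 5.1.A of Dixon--Mortimer (or observe that its proof is degree-agnostic); this is true, and $n-4\geq 124$ is comfortably above any small-degree threshold, but as written your proof leans on a slightly stronger statement than the one the paper records.
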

\begin{proof}
Let $z$ be a prime number such that $4+\sum_{3\leq p \leq z} p \leq n$, where the sum runs over distinct prime numbers from 3 to $z$.
Then $\Alt(\FF^\nu)$ contains an element $\eta_z$ whose non-trivial cycles are two transpositions and some cycles with length $3,\dots,z$. Hence
$\mathrm{o}(\eta_z)=2\prod_{3\leq p\leq z}p$.\\
If we show that there is a prime number $z$ such that 
$$
4+\sum_{3\leq p \leq z} p \leq n\quad\mbox{and}\quad\ln(\mathrm{o}(\eta_z))^2>\frac{1}{4}n\ln (n),
$$
we have done.\\
Let $\theta(z)=\ln(\mathrm{o}(\eta_z))=\ln (2)+\sum_{3\leq p \leq z} \ln (p)$ and let us denote $\theta^*(z)=\theta(z)-\ln (2)=\sum_{3\leq p \leq z} \ln (p)$.\\
Let $f(z) = \frac{z}{\ln(z)}$ . Since $f(z)$ is an increasing function for real $z > e$, in the case
when $z$ is a real number and $z \geq 19$ (for $z=19$ note  that $4 +
\sum_{2<p\leq 19} p = 79 < 128=2^7 \leq n$), we have that
$$
f(4) \ln(4) + f(3) \ln(3) = 7 < f(19) \ln(3) \leq f(z) \ln(3).
$$
So, if $z\geq 19$ and $z\in\mathbb{R}$, we can write
\begin{align*}
4 +\sum_{2< p\leq z} p &=f(4) \ln(4) +\sum_{2<p\leq z} f(p) \ln(p)\\
&=f(4) \ln(4) + f(3) \ln(3) +\sum_{3<p\leq z} f(p) \ln(p)\\
&< f(z) \ln(3) +\sum_{3<p\leq z} f(z) \ln(p)\\
&=\sum_{2<p\leq z} f(z) \ln(p)= f(z)
\sum_{2<p\leq z} \ln(p)= f(z)\theta^*(z).
\end{align*}
We shall choose $\bar{z} \geq 19$ such that $f(\bar{z})\theta^*(\bar{z}) = n$. Such a  $\bar{z}$ exists because
$f(19)\theta^*(19) < 100 < n$ and $f(z)\theta^*(z)$ is an increasing function assuming all
values.\\
Since $\theta^*(\bar{z}) > \bar{z}/2$ for  $\bar{z} \geq 19$, we have
$$
n=f(\bar{z})\theta^*(\bar{z})  =\frac{\bar{z}\theta^*(\bar{z})}{\ln(\bar{z})}<\frac{2(\theta^*(\bar{z}))^2}{\ln(2\theta^*(\bar{z}))}=
\frac{4(\theta^*(\bar{z}))^2}{2 \ln(2\theta^*(\bar{z}))}= f(4(\theta^*(\bar{z}))^2).
$$
However we also have $f(n \ln(n)) < n$. Since $f$ is an increasing function, this
shows that $n \ln(n) < 4(\theta^*(\bar{z}))^2 < 4(\theta(\bar{z}))^2$. It is now enough to consider $\tilde{z}$ as
the largest prime smaller than $\bar{z}$.
\end{proof}
Now, we compare the estimates from Theorem \ref{teo1} and Theorem \ref{teo2}.
Take $n = 2^{128}$ and $\eta \in\Alt(\FF^{128})$ such that $\mathrm{o}(\eta) \geq e^t$ ($\mathrm{o}(\eta)$ even), where
$t = \sqrt{(1/4)n \ln (n)} =\sqrt{(1/4)2^{128} \ln(2^{128})}$.
Since
$$
e^t = e^{\sqrt{2^{126}128 \ln (2)}} = e^{\sqrt{2^{133} \ln (2)}} = (e^{\sqrt{2 \ln (2)}})^{2^{66}},
$$
by replacing $e$ with $2^{\log_2 (e)}$, we obtain
$$
e^t = (2^{\log_2 (e)\sqrt{2 \ln (2)}})^{2^{66}}= 2^{2^{66}\log_2 (e)\sqrt{2 \ln (2)}} = 2^{2^{66}\epsilon},
$$
where $\epsilon\in\mathbb{R}$ is about  1.7. According to Theorem \ref{to3},  $\mathrm{o}(\eta) \geq e^{2^{66}\epsilon}$. 
If $\Alt(\FF^{128}) \subset \GL(\FF^N)$, we then need the the smallest
$N$ such that $\mathrm{o}(\eta) \leq (2^{N-1} -2)$ (Theorem \ref{teo1}). In other words we have to see
when the following inequality holds
\begin{equation}\label{oeta}
\mathrm{o}(\eta)= e^{2^{66}\epsilon} \leq (2^{N-1} -2).
\end{equation}
We observe that
\begin{itemize}
\item if $N = 2^{66}$, then \eqref{oeta} is false, since $2^{2^{66}\epsilon} > 2^{2^{66}} > 2^{2^{66}-1} - 2$;
\item if $N = 2^{67}$, then \eqref{oeta}  is true, since  $2^{2^{66}\epsilon} < 2^{2^{66}(1.7)} < 2^{2^{67}-1} - 2$.
\end{itemize}
Therefore, we need at least $m\geq 67$  to embed $\Alt(\FF^{128}) \subset \GL(\FF^{2^m})$, which is exactly the same value as in Proposition \ref{prop1}.

\section{Conclusion}
In this note we provided two elementary proofs,  lacking a deep algebraic background,  that the round functions of the AES cipher cannot be embedded  into a linear group acting on a vector space $W$, unless the dimension of
$W$ is at least $2^{67}$. Since  computing a  $2^{67}\times 2^{67}$ matrix is infeasible in practice, our result shows that this attack cannot be mounted in practice. Moreover, since we do not use  the specific structure of the AES in our proof of  Proposition \ref{prop1}, we note that such result could  be used also for any other block cipher ${\mathcal C}$ acting on $128-$bit messages such that 
the group generated by its round functions is $\Alt(\FF^{128})$, e.g.  SERPENT \cite{serpentalter}, or  $\Sym(\FF^{128})$.\\
Moreover we observe that for any block cipher ${\mathcal C}$ acting on $64-$bit messages such that 
$\Gamma_{\infty}({\mathcal C})=\Alt(\FF^{64})$ (e.g. KASUMI \cite{sparr2015round} and  a special extension of GOST \cite{GOSTarcarsal}) with a similar argument of Lemma \ref{lemma-final} we obtain
$$
(2^6)^{11}<2^{64}!<(2^6)^{13}.
$$
Hence
$$
|\Alt(\FF^{64})|= \frac{|\Sym(2^{64})|}{2} > \frac{2^{2^{66}}}{2}
  = 2^{2^{66}-1} > 2^{2^{64}}=2^{(2^{32})^2}> |\GL(\FF^{2^{32}})|
$$
and so if $\Alt(\FF^{64})<\GL(\FF^{2^{m}})$, then $m\geq 33$. Also in this case, we can conclude that the embedding of  GOST and KASUMI in a linear cipher is infeasible in practice.\\
Finally we note that the infeasibility of this type of linearization of a block cipher provides an additional motivation to check the size of the group generated by the round functions of a block cipher. In particular, it is important to check the optimal case when this group is the alternating or the symmetric group.

%%%%%%%%%%%%%%%%%%%%%%%%%%%%%%%%%%%%%%%%%%%%%%%%%%
\section*{Acknowledgment}
The authors would like to thank R\"udiger Sparr and Ralph Wernsdorf for their helpful suggestions and interesting comments.
%%%%%%%%%%%%%%%%%%%%%%%%%%%%%%%%%%%%%%%%%%%%%%%%%%%

\bibliographystyle{plain}
\bibliography{biblio}

\begin{thebibliography}{10}

\bibitem{partialnostro}
F.~Ald\`a, R.~Aragona, L.~Nicolodi, and M.~Sala.
\newblock Implementation and {I}mprovement of the {P}artial {S}um {A}ttack on
  6-{R}ound {AES}.
\newblock In {\em Physical and Data-Link Security Techniques for Future
  Communication Systems}, volume 358 of {\em LNEE}, pages 181--195. Springer,
  2015.

\bibitem{aragona2014group}
R.~Aragona, A.~Caranti, F.~Dalla~Volta, and M.~Sala.
\newblock On the group generated by the round functions of translation based
  ciphers over arbitrary finite fields.
\newblock {\em Finite Fields and Their Applications}, 25:293--305, 2014.

\bibitem{GOSTarcarsal}
R.~Aragona, A.~Caranti, and M.~Sala.
\newblock The group generated by the round functions of a {GOST}-like cipher.
\newblock {\em Annali di Matematica Pura e Applicata, Online First}, pages
  1--17, 2016.

\bibitem{bogdanov2011biclique}
A.~Bogdanov, D.~Khovratovich, and C.~Rechberger.
\newblock Biclique cryptanalysis of the full {AES}.
\newblock In {\em Advances in Cryptology--ASIACRYPT 2011}, volume 7073 of {\em
  LNCS}, pages 344--371. Springer, 2011.

\bibitem{caranti2009application}
A.~Caranti, F.~Dalla~Volta, and M.~Sala.
\newblock An application of the {O}'{N}an-{S}cott theorem to the group
  generated by the round functions of an aes-like cipher.
\newblock {\em Designs, Codes and Cryptography}, 52(3):293--301, 2009.

\bibitem{darafsheh2008maximum}
M.~R. Darafsheh.
\newblock The maximum element order in the groups related to the linear groups
  which is a multiple of the defining characteristic.
\newblock {\em Finite Fields and Their Applications}, 14(4):992--1001, 2008.

\bibitem{dixon1996permutation}
J.~D. Dixon and B.~Mortimer.
\newblock {\em Permutation groups}, volume 163.
\newblock Springer Science \& Business Media, 1996.

\bibitem{ferguson2001improved}
N.~Ferguson, J.~Kelsey, S.~Lucks, B.~Schneier, M.~Stay, D.~Wagner, and
  D.~Whiting.
\newblock Improved cryptanalysis of rijndael.
\newblock In {\em Fast Software Encryption}, volume 1978 of {\em LNCS}, pages
  213--230. Springer, 2001.

\bibitem{CGC-misc-nistDESfips46}
{National Bureau of Standards}.
\newblock {The Data Encryption Standard}.
\newblock {Federal Information Processing Standards Publication (FIPS) 46},
  1977.

\bibitem{CGC-misc-nistAESfips197}
{National Institute of Standards and Technology}.
\newblock {The Advanced Encryption Standard}.
\newblock {Federal Information Processing Standards Publication (FIPS) 197},
  2001.

\bibitem{thesisrimoldi}
A.~Rimoldi.
\newblock On algebraic and statistical properties of {AES}-like ciphers.
\newblock Ph{D} thesis, University of Trento, Department of Mathematics, 2005.
\newblock \url{http://eprints-phd.biblio.unitn.it/151/1/Provatemplate.pdf}.

\bibitem{CGC-cry-art-sparr08}
R.~Sparr and R.~Wernsdorf.
\newblock Group theoretic properties of {R}ijndael-like ciphers.
\newblock {\em Discrete Appl. Math.}, 156(16):3139--3149, 2008.

\bibitem{sparr2015round}
R.~Sparr and R.~Wernsdorf.
\newblock The round functions of {KASUMI} generate the alternating group.
\newblock {\em Journal of Mathematical Cryptology}, 9(1):23--32, 2015.

\bibitem{wagner1976faithful}
A.~Wagner.
\newblock The faithful linear representation of least degree of ${S}_n$ and
  ${A}_n$ over a field of characteristic 2.
\newblock {\em Mathematische Zeitschrift}, 151(2):127--137, 1976.

\bibitem{serpentalter}
R.~Wernsdorf.
\newblock The round functions of {SERPENT} generate the alternating group.
\newblock {\em preprint}, 2000.
\newblock
  \url{http://csrc.nist.gov/archive/aes/round2/comments/20000512-rwernsdorf.pdf}.

\end{thebibliography}

\end{document}